\newtheorem{theorem}{Theorem}
\newtheorem{question}{Question}
\newtheorem{lemma}{Lemma}
\newtheorem{proposition}{Proposition}
\newtheorem{corollary}{Corollary}
\def\bZ{\mathbb Z}
\def\bF{\mathbb F}
\def\bQ{\mathbb Q}
\def\bC{\mathbb C}
\def\bG{\mathbb G}
\def\fl{\mathrm{fl}}
\DeclareMathOperator{\Pic}{Pic}
\DeclareMathOperator{\Tr}{Tr}
\DeclareMathOperator{\Nm}{Nm}
\DeclareMathOperator{\Div}{Div}
\DeclareMathOperator{\Gal}{Gal}
\DeclareMathOperator{\End}{End}
\DeclareMathOperator{\Spec}{Spec}
\DeclareMathOperator{\Cl}{Cl}
\begin{document}

\title{On a problem \`a la Kummer-Vandiver for function fields}

\author{Bruno Angl\`es \and Lenny Taelman}

\address{
Universit\'e de Caen, 
CNRS UMR 6139, 
Campus II, Boulevard Mar\'echal Juin, 
B.P. 5186, 
14032 Caen Cedex, France.
}
\email{bruno.angles@unicaen.fr}

\address{ 
Mathematisch Instituut,
Universiteit Leiden,
P.O. Box 9512,
2300~RA Leiden, The Netherlands.
}
\email{lenny@math.leidenuniv.nl}

\maketitle

\begin{abstract}
We use Artin-Schreier base change to construct counterexamples to a Kummer-Vandiver type question for function fields.
\end{abstract}


\section{Introduction}

Let $p$ be a prime number and let $F$ be the maximal real subfield of $\bQ(\mu_p).$ The famous Kummer-Vandiver conjecture asserts that 
\[
	\bZ_p \otimes_\bZ \Pic O_F =\{ 0\}.
\]
It has been verified for all $p$ less than $163,577,856$ \cite{BH}. However, heuristic arguments of Washington suggest that the number of counterexamples $p$ up to $X$ should grow as $\log \log X$, making it difficult to find either counterexamples or convincing numerical evidence towards the conjecture. 

The second author has recently  proven a function field analogue of the Herbrand-Ribet theorem, and formulated a version of the Kummer-Vandiver conjecture in this context \cite{TAE1}. In this note, which complements \cite{TAE1}, we construct counterexamples to this Kummer-Vandiver statement. 

Let us now recall the statement of this analogue. Let $A=\bF_q[T]$ and let $C$ be the Carlitz module over $A$. This is the $A$-module scheme over $\Spec A$ whose underlying group scheme is the additive group $\bG_{a,A}$, and on which
$A$ acts via the $\bF_q$-algebra homomorphism
\[
	\phi\colon A \to \End \bG_{a,A} \colon T \mapsto T + \tau
\]
where $\tau\colon \bG_{a,A} \to \bG_{a,A}$ is the $q$-th power Frobenius endomorphism. It is an example of a Drinfeld module, and in many ways it is an analogue of the multiplicative group in characteristic zero \cite{GOS}.

Let $P\in A$ be monic irreducible, let $k=\bF_q(T)$ be the fraction field of $A$ and let $K/k$ be the extension obtained by adjoining the $P$-torsion points of $C$. Then $K/k$ is Galois and
there is a canonical isomorphism
\[
	\omega_P\colon \Gal(K/k)\to (A/P)^\times,
\]
which one can think of as the mod $P$ Teichm\"uller character. Let $R$ be the integral closure of $A$ in $K$ and put $Y = Y_P = \Spec R$.

Let $C[P]$ be the $P$-torsion subscheme of $C$ and let $C[P]^{\mathrm D}$ be its Cartier dual. Consider the flat cohomology group
\[
	H_P := \mathrm H^1( Y_{P,\fl}, C[P]^{\mathrm D} ).
\] 
This is an $A/P$-vector space on which the Galois group $\Gal(K/k)$ acts, so it decomposes in isotypical components as
\[
	H_P = \bigoplus_{n=1}^{q^{\deg P} - 1} H_P (\omega_P^n).
\]

In \cite{TAE1}, it is shown that for $n$ in the range $1 \leq n< q^{\deg P}-1$ which are
divisible by $q-1$ one has
\[
	H_P(\omega_P^{n-1})\not = \{0\} \quad\text{if and only if}\quad B(n)\equiv 0\pmod{P},
\]
where $B(n)\in k$ is the $n$-th Bernoulli-Carlitz number. The Kummer-Vandiver problem can be stated as follows (see \cite[Question 1]{TAE1}): 

\begin{question}\label{QKV}
Is  $H_P(\omega_P^{n-1}) = \{0\}$ for $n$ not divisible by $q-1$?
\end{question}

The analogy with the classical Kummer-Vandiver conjecture is (implicitly) explained in \cite[Remark 2]{TAE1}: using the Kummer sequence and flat duality it is shown that the classical Kummer-Vandiver conjecture is equivalent
with the statement that
\[
	\mathrm H^1( (\Spec \bZ[\zeta_\ell])_\fl, \mu_\ell^{\mathrm D} ) ( \chi_\ell^{n-1} ) \overset{?}{=} 0 \text{ if $n$ is odd},
\]
where $\ell$ is an odd prime, $\zeta_\ell$ a primitive $\ell$-th root of unity and $\chi_\ell$ denotes the mod $\ell$ cyclotomic character.

In this paper we use Artin-Schreier change of variables and computer calculations to construct counterexamples to the above statement. For example, we use properties of the prime
\[
	P = T^3 - T^2 + 1 \in\bF_3[T]
\]
to show that the prime
\[
	Q = P(T^3-T) = T^9 - T^6 - T^4 - T^3 - T^2 + 1
\]
satisfies  $H_Q(\omega_Q^{9840}) \neq 0$. Note that $9840=n-1$ with $n=(q^{\deg Q}-1)/2$.

The degree of the prime $Q$ is too high to allow for a direct computation of $H_Q$.

In a forthcoming paper we compare the flat cohomology groups of \cite{TAE1} with the group of ``units modulo circular units'' introduced by Anderson \cite{AND}, and show amongst other things that the Kummer-Vandiver problem of \cite{TAE1} is equivalent with Anderson's Kummer-Vandiver conjecture \cite[\S 4.12]{AND}. In particular, the present counterexamples will also serve as counterexamples to Anderson's conjecture.

\subsection*{Acknowledgements} The authors thank the referee for several suggestions and corrections that helped improve the paper.

 \section{Notation}
 
\subsection{$L$-functions}
Let $F/E$ be a finite abelian extension of function fields of curves over $\bF_q$. Assume that $\bF_q$ is algebraically closed in both $E$ and $F$. Let $\chi\colon \Gal(F/E) \to \bC^\times$ be a homomorphism, and let $E_\chi \subset F$ be the fixed field of $\ker \chi$. We set
\[
	L(X,E,\chi)=
	\prod_{v\, {\rm place}\, {\rm of}\, E}
	(1-\chi(v)X^{{\rm deg}v})^{-1} \in \bC[[X]],
\]
 where $\chi(v)=\chi( (v, E_\chi/E) )$ if $v$ is unramified in $E_\chi/E$ and $\chi(v)=0$ otherwise. Here $(-,E_\chi/E)$ denotes the global reciprocity map.
 Recall that $L(X,E,\chi)$ is a rational function and that if $\chi \neq 1$ then
$L(X,E,\chi)$ is a polynomial whose coefficients are algebraic integers.

\subsection{The cyclotomic function fields}
 Let $p$ be a prime number. Let $\bF_q$ be a finite field having $q$ elements, $q=p^s,$ where $p$ is the characteristic of $\bF_q.$ Let $A=\bF_q[T]$ be the polynomial ring in one variable $T$ and let  $k=\bF_q(T)$ be its field of fractions. We denote the set of monic elements in $A$ by $A_+.$ For $n\geq 0,$ we denote the set of elements in $A_+$ of degree $n$ by $A_n.$ We fix ${\overline  k}$, an algebraic closure of $k.$ All finite extensions of $k$ considered in this note are assumed to be contained in ${\overline k}.$ We denote the unique place of $k$ which is a pole of $T$ by $\infty.$
 
 Let $P\in A$  be monic irreducible of degree $d.$ We denote the $P$-th cyclotomic function field by $K_P$ (see \cite{GOS}, chapter 7). Recall that $K_P/k$ is the maximal abelian extension of $k$ such that:
\begin{enumerate}
 \item $K_P/k$ is unramified outside $P$ and $\infty$,
 \item $K_P/k$ is tamely ramified at $P$ and $\infty$,
 \item for every place $v$ of $K_P$ above $\infty,$ the completion of $K_P$ at $v$ is isomorphic to $\mathbb F_q((\frac{1}{T}))({}^{q-1}\sqrt{-T})$.
\end{enumerate}
The Galois group $\Gal(K_P/k)$ is canonically isomorphic with $(A/PA)^\times$ and the subgroup $\bF_q^\times \subset (A/PA)^\times$ is both the inertia and the decomposition group of $\infty$ in $K_P/k$.


 \section{Cyclicity of divisor class groups}\label{seccyc}
 
 \subsection{An Artin-Schreier extension and the function $\gamma$.}
Let $i\colon A_+\rightarrow \mathbb \bZ/p\bZ$ be the function that maps a polynomial
\[
	T^n + \alpha_1 T^{n-1} + \cdots + \alpha_n
\]
to $\Tr_{\bF_q/\bF_p} \alpha_1 \in \bZ/p\bZ$. 
Observe that for all $a,b\in A_{+}$ we have $i(ab)=i(a)+i(b)$.
 
 Let $\theta \in \bar{k}$ be a root of $X^p-X=T$. Then the extension $\tilde{k}$ obtained by adjoining $\theta$ to $k$ is rational and we have $\tilde{k}=\bF_q(\theta)$. The extension ramifies only at $\infty$. The integral closure of $A$ in $\tilde{k}$, which we denote by
$\tilde{A}$, is the polynomial ring $\bF_q[\theta]$ in $\theta$. 

We have an isomorphism of groups $\bZ/p\bZ \to \Gal(\tilde{k}/k)$ given by 
\[
	n \mapsto \sigma_n = \left[ \theta \mapsto \theta - n \right].
\]
Let $( - , \widetilde {k}/k)$ be the Artin symbol for ideals, then 
for all $a\in A_+$ we have \cite[Lemma 2.1]{ANG}
\begin{equation}\label{eqartinsymbol}
	(aA,\widetilde{k}/k)= \sigma_{i(a)}.
\end{equation}

Let $n$ be a positive integer. By \cite[Lemma 3.2]{ANG}  for all $m$ sufficiently large we have
\[
	\sum_{a\in A_m} i(a)a^n = 0.
\]
We define 
\[
	\gamma (n):=\sum_{m\geq0}\sum_{a\in A_m}i(a)a^n\, \in A.
\]

\subsection{Cyclotomic extensions of $k$ and of $\tilde{k}$}

Now, we fix a prime $P$ in $A$ of degree $d$ such that $i(P)\not =0.$ Set $Q(\theta):=P(T)\in \widetilde {A}$. Note that by (\ref{eqartinsymbol}) the polynomial $Q(\theta)\in \bF_q[\theta]$ is irreducible. Its degree is $pd$. 

Let $K_P$ be the $P$-th cyclotomic function field for $A$, with Galois group $\Delta = (A/PA)^\times$, and let $\widetilde{K_Q}$ be the $Q$-th cyclotomic function field for $\widetilde {A}$,
with Galois group $\widetilde{\Delta} = (\widetilde{A}/Q\widetilde{A})^\times$. By \cite[Lemma 2.2]{ANG} we have:
\[
	K_P\subset \widetilde {K_Q}.
\]
Let $L$ be the compositum of $\widetilde{k}$ and $K_P$ inside $\widetilde{K_Q}$. Then $L$ is an abelian extension of $k$ with Galois group $\bZ/p\bZ \times \Delta$.

\[
\begin{tikzpicture}[node distance=2cm, auto]
  \node (Ktilde) {$\widetilde{K}_Q$};
  \node (L) [below of=Ktilde] {$L$};
  \node (K) [below of=L, left of=L, node distance=1.5cm] {$K_P$};
  \node (ktilde) [below of=L, right of=L, node distance=1.5cm] {$\tilde{k}$};
  \node (k) [below of=K, right of=K, node distance=1.5cm] {$k$};
  \draw (k) to node {$\Delta$} (K);
  \draw (k) to node [swap] {$\bZ/p\bZ$} (ktilde);
  \draw (K) to node {$\bZ/p\bZ$} (L);
  \draw (ktilde) to node [swap] {$\Delta$} (L);
  \draw (L) to node {} (Ktilde);
\end{tikzpicture}
\]

The inclusion $A/PA \subset \widetilde{A}/Q\widetilde{A}$ induces an injective homomorphism $\Delta\to\widetilde{\Delta}$. On the other hand, we can identify the Galois group of $L$ over $\tilde{k}$ with $\Delta$, and obtain a surjective map
\[
	\widetilde{\Delta} \to \Delta,
\]
which is explicitly given by
\[
	(\widetilde{A}/Q\widetilde{A})^\times \to (A/PA)^\times\colon
	a \mapsto \Nm_{\tilde{k}/k} a.
\]

\subsection{Comparison of $L$-functions}

Let $W_0$ be the ring of Witt vectors of $A/QA$ and let $W=W_0[\zeta_p]$, where $\zeta_p$ is a primitive $p$-th root of unity. Let
$\omega_P\colon \Delta \to W^\times$ and $\omega_Q\colon \widetilde{\Delta}\to W^\times$ be the Teichm\"uller characters. We will denote by $\widetilde{\omega_P}$ the same character as $\omega_P$, but seen as a character on $\Gal(L/\widetilde{k})$. In particular, we have
\[
	\widetilde{\omega_P}=\omega_Q^{\frac {q^{pd}-1}{q^d-1}}.
\]

Let $n$ be an integer such that $1\leq n\leq q^d-2.$ Then \cite[Lemma 2.4]{ANG}
\begin{equation}\label{Ldecomp}
	L(X, L/\tilde{k}, \widetilde {\omega_P}^n)= \prod_{\phi} L(X, L/k, \phi \omega_P^n),
\end{equation}
where $\phi$ runs over all characters of $\Gal(\tilde{k}/k)=\bZ/p\bZ$.

Observe that, if $\phi \not =1,$ then $L(X,\phi \omega_P^n)$ is a polynomial of degree $d$ (apply \cite{ANG}, Lemma 2.3 for both $A$ and $\widetilde A$). Furthermore, we have:
\begin{equation}\label{psiomega}
	L(X, L/k, \psi \omega_P^n)=\sum_{m=0}^{d} 
	\left( \sum_{a\in A_m} \zeta_p^{i(a)} \omega_P(a)^n \right) X^m,
\end{equation}
where $\psi\colon \bZ/p\bZ \to W^\times$ is the character that maps $1$ to $\zeta_p$.

\subsection{Congruences}
Assume that $n$ is not divisible by $q-1$. Then the Bernoulli-Goss polynomial $\beta(n)$ is defined as follows:
\[
	\beta(n) = \sum_{m\geq 0} \sum_{a\in A_m} a^n \in A.
\]
(The inner sum vanishes for all sufficiently large $m$.)

\begin{proposition}\label{propannulation}
Assume that $n$ is not divisible by $q-1$ and  that $p$ is odd.
Then the following are equivalent:
\begin{enumerate}
\item $v_p(L(1,L/k,\psi\omega_P^n)) \geq 2/(p-1)$,
\item $\beta(n)$ and $\gamma(n)$ are divisible by $P$.
\end{enumerate}
\end{proposition}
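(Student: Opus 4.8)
The plan is to analyze the $p$-adic valuation of the value $L(1,L/k,\psi\omega_P^n)$ by working directly with the explicit polynomial expression \eqref{psiomega}, reducing modulo the maximal ideal of $W$ at successive orders. Write $\mathfrak p$ for the maximal ideal of $W$; since $p$ is odd and $W=W_0[\zeta_p]$, the ramification index of $W/W_0$ is $p-1$, so $v_p(\zeta_p - 1) = 1/(p-1)$. Evaluating \eqref{psiomega} at $X=1$ gives
\[
	L(1,L/k,\psi\omega_P^n) = \sum_{m=0}^{d} \sum_{a\in A_m} \zeta_p^{i(a)}\,\omega_P(a)^n.
\]
First I would substitute $\zeta_p = 1 + (\zeta_p-1)$ and expand $\zeta_p^{i(a)} = 1 + i(a)(\zeta_p - 1) + O(\mathfrak p^2)$, where $i(a)$ is viewed as an element of $\{0,1,\dots,p-1\}$ lifted to $W$ (this is legitimate since the error term only involves $(\zeta_p-1)^2$, whose valuation is $2/(p-1)$). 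This yields
\[
	L(1,L/k,\psi\omega_P^n) \equiv \sum_{m\geq 0}\sum_{a\in A_m}\omega_P(a)^n + (\zeta_p-1)\sum_{m\geq 0}\sum_{a\in A_m} i(a)\,\omega_P(a)^n \pmod{\mathfrak p^2}.
\]

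The next step is to identify the two sums on the right. For the first sum: since $n$ is not divisible by $q-1$, one has $\sum_{a\in A_m}\omega_P(a)^n \equiv \sum_{a\in A_m} a^n \pmod{\mathfrak p}$ after lifting via Teichmüller, and the total sum reduces mod $P$ to the Bernoulli-Goss polynomial $\beta(n) \bmod P$; here I would use that $\omega_P(a)$ is a Teichmüller lift of $a \bmod P$ together with the fact that the reduction $W/\mathfrak p$ is (a quotient of) $A/QA$ containing $A/PA$. So the first sum lies in $\mathfrak p$ if and only if $P \mid \beta(n)$. For the second sum, the same Teichmüller argument shows it reduces mod $\mathfrak p$ to $\gamma(n) \bmod P$ (using the definition of $\gamma$ and the vanishing of the inner sums for large $m$ from \cite[Lemma 3.2]{ANG}). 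Therefore $v_p(L(1,L/k,\psi\omega_P^n)) \geq 2/(p-1)$ forces both: the first sum must vanish mod $\mathfrak p$ (giving $P \mid \beta(n)$), and then, dividing out, the second sum must also vanish mod $\mathfrak p$ (giving $P \mid \gamma(n)$). Conversely, if $P$ divides both $\beta(n)$ and $\gamma(n)$, then both displayed sums lie in $\mathfrak p$, the first in fact coming from $W_0$ where $\mathfrak p \cap W_0 = pW_0$ contributes valuation $\geq 1 \geq 2/(p-1)$, and the $(\zeta_p-1)$-multiple of the second contributes valuation $\geq 1/(p-1) + 1 = $ something $\geq 2/(p-1)$ as well (here I need $1 + 1/(p-1) \geq 2/(p-1)$, i.e. $p-1 \geq 1$, which is automatic); hence the total has valuation $\geq 2/(p-1)$.

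The main obstacle I anticipate is keeping careful track of which ring the various sums live in and how the valuation behaves under reduction. Specifically, the first sum $\sum_{a}\omega_P(a)^n$ a priori lies in $W_0$, not just $W$, so its $\mathfrak p$-adic valuation is always an integer multiple of... well, $v_p$ restricted to $W_0$ takes values in $\bZ$, so "$\geq 1/(p-1)$" for that term is the same as "$\geq 1$". This is exactly what makes the equivalence clean: the contribution of the first sum is either $0$ or has valuation $\geq 1 > 2/(p-1)$ (for $p \geq 3$ wait — for $p=3$, $2/(p-1) = 1$, so I need the non-strict inequality, which is fine). The delicate case is therefore $p = 3$, where $2/(p-1) = 1$ exactly, and I should double-check that the congruence argument still separates the two conditions there; I expect it does, because the $\gamma(n)$ condition enters through the $(\zeta_p-1)$ coefficient whose valuation is $1/2$, and $1/2 + (\text{valuation of }\gamma\text{-sum})$ must be compared against $1$. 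I would also need to justify rigorously that $W/\mathfrak p \cong A/QA$ and that under this identification $\bF_q$-Teichmüller lifts of elements of $A/PA \subset A/QA$ are compatible with $\omega_P$, which follows from the construction of $W_0$ as Witt vectors of $A/QA$ and standard properties of Teichmüller representatives, but should be stated explicitly. Finally, one small point to verify is that $\psi\omega_P^n \neq 1$ so that $L(X,L/k,\psi\omega_P^n)$ is genuinely the degree-$d$ polynomial of \eqref{psiomega} — this holds because $\psi$ is a nontrivial character of $\bZ/p\bZ$, so the product character cannot be trivial regardless of $n$.
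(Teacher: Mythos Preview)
Your proposal is correct and follows essentially the same route as the paper: expand $\zeta_p^{i(a)}$ to first order in $(\zeta_p-1)$, obtain $L(1,L/k,\psi\omega_P^n)\equiv S_1+(\zeta_p-1)S_2\pmod{(\zeta_p-1)^2}$ with $S_1=L(1,K_P/k,\omega_P^n)\in W_0$ and $S_2=\sum_{m,a}i(a)\omega_P(a)^n\in W_0$, then use that $v_p$ takes integer values on $W_0$ (so the two contributions separate cleanly when $p$ is odd) and identify the reductions with $\beta(n)\bmod P$ and $\gamma(n)\bmod P$. The paper's proof is precisely this argument, stated more tersely; your additional remarks about the $p=3$ boundary case and the identification $W/\mathfrak p\cong A/QA$ are correct and simply make explicit what the paper leaves implicit.
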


\begin{proof}
Using the congruence
\[
	\zeta_p^i \equiv 1 + i (\zeta_p-1) \pmod{(\zeta_p-1)^2}
\]
we deduce from (\ref{psiomega}) the congruence
\[
	L(1,L/k, \psi \omega_P^n)\equiv L(1,K_P/k,\omega_P^n)+(\zeta_p-1)
	\left(\sum_{m=0}^d\sum_{a\in A_m}i(a)\omega_P(a)^n\right)
\]
modulo $(\zeta_p-1)^2$. Since $L(1,K_P/k,\omega_P^n)\in W_0$ and $p$ is odd, it follows that $L(1,L/k,\psi\omega_P^n)$ vanishes modulo $(\zeta_p-1)^2$ if and only if both
\[
	L(1,K_P/k,\omega_P^n)\equiv 0 \pmod{p}
\]
and
\[
	\sum_{m=0}^d\sum_{a\in A_m}i(a)\omega_P(a)^n \equiv 0 \pmod{\zeta_p-1}.
\]
The first congruence holds if and only if $P$ divides $\beta(n)$ and the second if and only if $P$ divides $\gamma(n)$.
\end{proof}

\subsection{Divisor class groups}

Let $E$ be a finite extension of $k$, with constant field $\bF_{q^n}$. We have an exact sequence
\[
	0 \to \bF_{q^n}^\times \to E^\times \to \Div^0 E \to \Cl E \to 0
\]
where  $\Div^0 E$ is the group of degree $0$ divisors on $E$ and $\Cl E$ the group of divisor classes of 
degree $0$ of $E$. Since $W$ is flat over $\bZ$ this sequence remains exact after tensoring with $W$, and
since $\bF_{q^n}^\times$ has no $p$-torsion we obtain
a short exact sequence
\begin{equation}\label{CEdef}
	0 \to W \otimes E^\times \to W \otimes \Div^0 E \to C(E) \to 0,
\end{equation}
where $C( E ) = W \otimes \Cl E$. 

\begin{proposition}
Let $F/E$ be a finite Galois extension with Galois group $G$. Then there is a natural short exact sequence
\[
	0 \to C(E) \to C(F)^G \to W \otimes \frac{ (\Div F)^G }{ \Div E }
\]
and $(\Div F)^G/\Div E$ is generated by the ramified primes.
\end{proposition}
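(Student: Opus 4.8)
The plan is to deduce this from the long exact sequence in (Tate or flat) cohomology applied to the short exact sequence (\ref{CEdef}), combined with a comparison of divisor groups. First I would write down the fundamental diagram: for the extension $F/E$ with group $G$, both $E$ and $F$ carry sequences of the shape (\ref{CEdef}), and pullback of divisors and functions gives a map of short exact sequences from the $E$-sequence to the $G$-invariants of the $F$-sequence. Taking $G$-invariants of
\[
	0 \to W \otimes F^\times \to W \otimes \Div^0 F \to C(F) \to 0
\]
yields a left-exact sequence, and the key point is to identify the relevant terms. The module $W\otimes \Div F$ is the free $W$-module on the places of $F$, and since $G$ permutes these places with the places of $E$ as orbit set, one has $(W\otimes\Div F)^G = W\otimes \Div E$ canonically — here it is essential that we have tensored with $W$ so that the invariants of a permutation module on an orbit $G/G_v$ are spanned by the orbit sum, with \emph{no} correction term. (Restricting to degree-zero divisors, $(W\otimes\Div^0 F)^G$ sits inside $W\otimes\Div E$ with cokernel a subquotient of $W$, but this will not matter for the final three-term sequence.)

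Next I would run the snake/long-exact-sequence argument. From $0\to W\otimes F^\times \to W\otimes\Div^0 F\to C(F)\to 0$, taking $G$-invariants gives
\[
	0 \to (W\otimes F^\times)^G \to (W\otimes \Div^0 F)^G \to C(F)^G \to \mathrm H^1(G, W\otimes F^\times).
\]
Now I would compare this with the $E$-level sequence $0\to W\otimes E^\times \to W\otimes\Div^0 E \to C(E)\to 0$. The inclusion $E^\times \hookrightarrow (F^\times)^G$ is an equality, so $W\otimes E^\times = (W\otimes F^\times)^G$ (again using flatness of $W$). Hence $C(E)$ is the cokernel of $W\otimes E^\times \to W\otimes\Div^0 E$, while $C(F)^G$ receives a map from the cokernel of $W\otimes E^\times \to (W\otimes\Div^0 F)^G$. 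Chasing the diagram
\[
\begin{array}{ccccccc}
0 \to & W\otimes E^\times & \to & W\otimes\Div^0 E & \to & C(E) & \to 0 \\
 & \| & & \downarrow & & \downarrow & \\
0 \to & (W\otimes F^\times)^G & \to & (W\otimes\Div^0 F)^G & \to & C(F)^G & \to \mathrm H^1(G,W\otimes F^\times)
\end{array}
\]
gives, via the snake lemma, an exact sequence
\[
	0 \to C(E) \to C(F)^G \to \frac{(W\otimes\Div^0 F)^G}{W\otimes\Div^0 E},
\]
and the last term injects into $(W\otimes\Div F)^G/(W\otimes\Div E) = W\otimes \big((\Div F)^G/\Div E\big)$, which is what is claimed.

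Finally I would verify the description of $(\Div F)^G/\Div E$. A place $w$ of $F$ lying over $v$ of $E$ contributes, after summing over its $G$-orbit, the element $e_w\cdot \big(\sum_{w'\mid v} w'\big)$ where $e_w$ is the ramification index; this equals $e_w$ times the image of $v$ under pullback $\Div E \to \Div F$ only when one is careful, and the upshot is that $(\Div F)^G$ is spanned by $\Div E$ together with the classes $\frac{1}{e_w}(\text{fiber sum})$ supported at the \emph{ramified} places, so the quotient $(\Div F)^G/\Div E$ is generated by the ramified primes. The main obstacle, and the only genuinely non-formal point, is exactly this last bookkeeping with ramification indices together with the verification that passing from $\Div^0$ to $\Div$ (and the attendant $W$-torsion-free ``degree'' discrepancies) does not introduce spurious contributions — everything else is a routine diagram chase made clean by the flatness of $W$ and the vanishing of $p$-torsion in the constant-field factors. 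I would also remark that one does not need $\mathrm H^1(G,W\otimes F^\times)$ to vanish; its only role is to receive the connecting map, and exactness at $C(F)^G$ in the three-term sequence follows purely from the diagram chase.
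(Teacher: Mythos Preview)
Your argument has the right shape, but there is a genuine gap in your final remark: you \emph{do} need the vanishing of $\mathrm H^1(G, W\otimes F^\times)$. Without it the bottom row of your diagram is only left exact, so the snake lemma gives you at best the injection $C(E)\hookrightarrow C(F)^G$; it does \emph{not} produce a map $C(F)^G \to W\otimes((\Div F)^G/\Div E)$ defined on all of $C(F)^G$. Concretely, to send a class $c\in C(F)^G$ to the divisor quotient you must first lift $c$ to an element of $(W\otimes\Div^0 F)^G$, and the obstruction to such a lift is precisely the image of $c$ under the connecting map to $\mathrm H^1(G,W\otimes F^\times)$. The paper handles this by invoking Hilbert~90: $\mathrm H^1(G,F^\times)=0$, and flatness of $W$ over $\bZ$ then gives $\mathrm H^1(G,W\otimes F^\times)=W\otimes\mathrm H^1(G,F^\times)=0$, so that $(W\otimes\Div^0 F)^G \twoheadrightarrow C(F)^G$ and the comparison with the $E$-level sequence becomes a straightforward quotient computation.

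There is also a smaller inconsistency earlier in your write-up. You assert $(W\otimes\Div F)^G = W\otimes\Div E$ ``canonically,'' which is true via the orbit-sum identification $v\mapsto\sum_{w\mid v} w$; but the map you actually use in the commutative diagram (the one compatible with $E^\times\hookrightarrow F^\times$) is \emph{pullback} of divisors, $v\mapsto\sum_{w\mid v} e_w\, w$. Under pullback the cokernel of $\Div E \to (\Div F)^G$ is $\bigoplus_v \bZ/e_v\bZ$, which is exactly why the quotient is generated by the ramified primes. Your final paragraph recovers this, but it contradicts your earlier claimed equality; the two identifications should be kept distinct.
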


\begin{proof} By Hilbert 90 we have ${\mathrm H}^1( G, F^\times )=0$, and since $W$ is flat over $\bZ$ we have
\[
	{\mathrm H}^1( G, W \otimes F^\times ) = W \otimes {\mathrm H}^1( G, F^\times )=0.
\]
Taking $G$-invariants in the sequence (\ref{CEdef}) for $F$ 
gives a short exact sequence
\[
	0 \to W\otimes E^\times \to W \otimes (\Div^0 F)^G \to C(F)^G \to 0.
\]
Comparing this with 	(\ref{CEdef}) for $E$ gives the desired exact sequence. 
\end{proof}

\begin{corollary}\label{cordivs}
Assume that $n$ is not divisible by $q-1$. Then 
\[
	C(K_P)(\omega_P^{-n}) = C(L)(\widetilde{\omega_P}^{-n})^{\Gal(L/K_P)}
\]
and
\[
	C(L)(\widetilde{\omega_P}^{-n}) = C(\widetilde{K_Q})(\omega_Q^{-n(q^{pd}-1)/(q^d-1)}).
\]
\end{corollary}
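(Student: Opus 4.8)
The plan is to apply the preceding Proposition twice: once to $K_P\subset L$ and once to $L\subset\widetilde{K_Q}$, and in each case to check that after passing to the relevant isotypical component the ``ramified primes'' term on the right disappears. Throughout one uses that $q^d-1$ and $q^{pd}-1$ are prime to $p$ and that $W$ contains the corresponding roots of unity, so the isotypical idempotents $e_\chi$ exist in $W[\Delta]$ and in $W[\widetilde\Delta]$; hence $M\mapsto M(\chi)$ is exact, and, the Galois groups being abelian, $e_\chi$ commutes with taking invariants under any subgroup $H$, so that $(M(\chi))^H=(M^H)(\chi)$.

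\emph{First step: $K_P\subset L$.} Put $G:=\Gal(L/K_P)\cong\Gal(\widetilde k/k)$. The Proposition gives a left-exact sequence
\[
0\to C(K_P)\to C(L)^G\to W\otimes\frac{(\Div L)^G}{\Div K_P},
\]
whose last term is generated by the primes ramified in $L/K_P$. Since $L=K_P\widetilde k$ is obtained from $\widetilde k/k$ by base change and $\widetilde k/k$ ramifies only at $\infty$, the extension $L/K_P$ is unramified outside the primes of $K_P$ above $\infty$; these form a single $\Delta$-orbit, each with decomposition group the subgroup $\bF_q^\times\subset\Delta$, and, being of prime degree $p$, a ramified one is totally ramified with a unique prime of $L$ above it, which is therefore fixed by $\bF_q^\times$. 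Hence $\bF_q^\times$ acts trivially on $W\otimes(\Div L)^G/\Div K_P$. On the other hand $\omega_P$ restricts on $\bF_q^\times$ to the tautological character $\bF_q^\times\hookrightarrow\mu_{q-1}(W)\subset W^\times$, so $\omega_P^{-n}|_{\bF_q^\times}\neq1$ because $q-1\nmid n$. It follows that the $\omega_P^{-n}$-component of the last term vanishes, and applying $(-)(\omega_P^{-n})$ to the sequence produces an isomorphism $C(K_P)(\omega_P^{-n})\to(C(L)^G)(\omega_P^{-n})=\bigl(C(L)(\widetilde{\omega_P}^{-n})\bigr)^G$, which is the first assertion, granting that $\widetilde{\omega_P}$ corresponds to $\omega_P$ under $\Gal(L/\widetilde k)\cong\Delta\cong\Gal(L/k)/G$.

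\emph{Second step: $L\subset\widetilde{K_Q}$.} Put $G':=\Gal(\widetilde{K_Q}/L)=\ker(\widetilde\Delta\to\Delta)$, of order $(q^{pd}-1)/(q^d-1)$. The Proposition gives $0\to C(L)\to C(\widetilde{K_Q})^{G'}\to W\otimes(\Div\widetilde{K_Q})^{G'}/\Div L$, the last term being generated by the primes ramified in $\widetilde{K_Q}/L$. Now $\widetilde{K_Q}/\widetilde k$ is ramified only at $Q$ and at $\infty$, with inertia $\widetilde\Delta$ and $\bF_q^\times$ respectively; as $\widetilde\Delta\to\Delta$ is $a\mapsto\Nm_{\widetilde k/k}a$ and $\Nm(c)=c^{(q^{pd}-1)/(q^d-1)}=c^{p}$ for $c\in\bF_q^\times$, while $\gcd(p,q-1)=1$, we get $\bF_q^\times\cap G'=1$, so $\infty$ is unramified in $\widetilde{K_Q}/L$. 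Thus the only prime of $L$ ramifying in $\widetilde{K_Q}/L$ is the one above $Q$, and there $\widetilde{K_Q}/L$ is totally ramified of degree $|G'|$; hence $(\Div\widetilde{K_Q})^{G'}/\Div L$ is cyclic and killed by $|G'|=1+q^d+\cdots+q^{(p-1)d}\equiv1\pmod p$, a unit in $W$. Therefore $W\otimes(\Div\widetilde{K_Q})^{G'}/\Div L=0$ and $C(L)\to C(\widetilde{K_Q})^{G'}$ is an isomorphism. Finally set $\chi:=\omega_Q^{-n(q^{pd}-1)/(q^d-1)}$; by $\widetilde{\omega_P}=\omega_Q^{(q^{pd}-1)/(q^d-1)}$ this equals $\widetilde{\omega_P}^{-n}$ and is inflated from $\Delta$, hence trivial on $G'$, so on $C(\widetilde{K_Q})$ its $\chi$-component is automatically $G'$-fixed and $(C(\widetilde{K_Q})^{G'})(\chi)=C(\widetilde{K_Q})(\chi)$, while on $C(L)$ it is $C(L)(\widetilde{\omega_P}^{-n})$. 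Applying $(-)(\chi)$ to the isomorphism above gives the second assertion.

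I expect the two left-exact sequences and the bookkeeping of Galois groups to be routine — the former is exactly the content of the Proposition. The one delicate point is in the first step: the quotient $W\otimes(\Div L)^G/\Div K_P$ need not vanish, but it is supported at the primes above $\infty$, on which the decomposition group $\bF_q^\times$ acts trivially, whereas $\omega_P^{-n}$ is nontrivial on $\bF_q^\times$ precisely because $q-1\nmid n$; this is exactly where the hypothesis of the Corollary enters.
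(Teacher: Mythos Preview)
Your proof is correct and follows essentially the same strategy as the paper: apply the Proposition to each of the two extensions and check that the ramified-primes term has trivial $\omega^{-n}$-component. The only minor difference is in the second step: the paper simply notes that $\widetilde{K_Q}/L$ is unramified away from $Q$ and the primes above $\infty$, and kills both contributions by the character argument (the unique prime above $Q$ is fixed by all of $\widetilde{\Delta}$, and the primes above $\infty$ are fixed by $\bF_q^\times$), whereas you go further and show that $\infty$ is in fact unramified in $\widetilde{K_Q}/L$ (via $\bF_q^\times\cap G'=1$) and that the $Q$-contribution is torsion of order $|G'|\equiv 1\pmod p$, hence already zero after tensoring with $W$ before taking isotypical components. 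Both routes work; yours gives a slightly stronger intermediate statement.
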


\begin{proof}
Since $L/K_P$ is unramified away from the primes above $\infty$, we have that
$ (\Div L)^G/\Div K_P$ is generated by the primes above $\infty$.   Let $S$ be the set of primes of $L$ above $\infty$ and $W^S$ the free $W$-module with basis $S$. Because $n$ is not divisible by $q-1$ we have 
\[
	W^S(\widetilde{\omega_P}^{-n})=0,
\]
hence the first claim follows from the Proposition. For the second claim, use that $\widetilde{K_Q}/L$ is unramified away from $Q$ and the primes above $\infty$, and that $Q$ is totally ramified. 
\end{proof}

\begin{theorem} \label{Theoremcyc}
Assume $p\neq 2$.  Let $P\in A$ be monic irreducible of degree $d$, and such that $i(P)\neq 0.$ 
Let $n$ be an integer such that $1\leq n \leq q^d-2$, not divisible by $q-1$ and such that $\beta(n)$ and $\gamma(n)$ are divisible by $P$.  Then $C(\widetilde{K_Q})(\omega_Q^{-n(q^{pd}-1)/(q^d-1)})$ is not cyclic.
\end{theorem}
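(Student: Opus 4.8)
The plan is to reduce, via Corollary~\ref{cordivs}, to a non-cyclicity statement over the unramified ring $W_0$, and then to play off the length of that module against the length of its $\Gal(L/K_P)$-invariants, the two lengths being read off from $L$-functions. Write $G=\Gal(L/K_P)\cong\bZ/p\bZ$ and $m=n(q^{pd}-1)/(q^d-1)$. By Corollary~\ref{cordivs} we have $C(\widetilde{K_Q})(\omega_Q^{-m})\cong M:=C(L)(\widetilde{\omega_P}^{-n})$ and $M^{G}\cong C(K_P)(\omega_P^{-n})$. Since $\widetilde{\omega_P}$ has values in $W_0^\times$ and the order of $\Delta$ is prime to $p$, the isotypic projector lies in $W_0[\Delta]$, so $M=W\otimes_{W_0}N$ with $N:=(W_0\otimes\Cl L)(\widetilde{\omega_P}^{-n})$ a $W_0[G]$-module and $M^{G}=W\otimes_{W_0}N^{G}$; moreover, reducing modulo maximal ideals, $M$ is cyclic over $W$ if and only if $N$ is cyclic over $W_0$. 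So it suffices to prove that $N$ is not cyclic over $W_0$.

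First I would compute lengths. As $q-1\nmid n$, the characters $\widetilde{\omega_P}^{n}$ and $\omega_P^{n}$ are nontrivial on $\bF_q^\times$, so the relevant eigenspaces of the unit groups vanish and the analytic class number formula for function fields underlying \cite{TAE1} gives $\Leng_{W_0}N=v_p(L(1,L/\widetilde k,\widetilde{\omega_P}^{n}))$; likewise $M^{G}\cong C(K_P)(\omega_P^{-n})$ gives, after dividing out the base change $W_0\to W$, $\Leng_{W_0}N^{G}=\Leng_{W_0}(W_0\otimes\Cl K_P)(\omega_P^{-n})=v_p(L(1,K_P/k,\omega_P^{n}))$. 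Now apply the factorization~(\ref{Ldecomp}): $L(1,K_P/k,\omega_P^{n})$ is its $\phi=1$ factor, and the remaining $p-1$ factors are the $\Gal(W/W_0)$-conjugates of $L(1,L/k,\psi\omega_P^{n})$, hence have the same $v_p$. Therefore
\[
	\Leng_{W_0}N-\Leng_{W_0}N^{G}=(p-1)\,v_p(L(1,L/k,\psi\omega_P^{n})).
\]
By Proposition~\ref{propannulation}, since $P$ divides both $\beta(n)$ and $\gamma(n)$ and $p$ is odd, $v_p(L(1,L/k,\psi\omega_P^{n}))\ge 2/(p-1)$, so $\Leng_{W_0}N-\Leng_{W_0}N^{G}\ge 2$.

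Finally I would invoke a short module-theoretic fact. If $N$ were cyclic over $W_0$, then $N\cong W_0/p^{b}W_0$ and a generator $\sigma$ of $G$ would act $W_0$-linearly by multiplication by some $u\in(W_0/p^{b}W_0)^\times$ with $u^{p}=1$. Writing $u=1+p^{c}w$ and expanding $u^{p}$, the fact that $W_0$ is unramified over $\bZ_p$ (and $p$ odd) forces $u\equiv 1\pmod{p^{b-1}}$; hence $\sigma-1$ kills $pN$, so $pN\subseteq N^{G}$ and $\Leng_{W_0}N-\Leng_{W_0}N^{G}\le\Leng_{W_0}(N/pN)=1$. This contradicts the inequality of the previous paragraph, so $N$, and with it $C(\widetilde{K_Q})(\omega_Q^{-m})$, is not cyclic.

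The delicate part is not this endgame but assembling the class number formula in exactly the form used: fixing the normalization of $v_p$, checking that the ``odd'' eigenspaces carry no unit or regulator contribution, and keeping the dictionary between $\omega_P$ on $\Gal(K_P/k)$, $\widetilde{\omega_P}$ on $\Gal(L/\widetilde k)$ and $\omega_Q$ on $\Gal(\widetilde{K_Q}/\widetilde k)$ consistent across the Artin--Schreier base change, so that the character entering the class number formula matches the ones appearing in~(\ref{Ldecomp}) and in Corollary~\ref{cordivs}.
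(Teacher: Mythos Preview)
Your proof is correct and follows the same strategy as the paper: identify the module with $C(L)(\widetilde{\omega_P}^{-n})$ via Corollary~\ref{cordivs}, use the class number formula together with the factorization~(\ref{Ldecomp}) and Proposition~\ref{propannulation} to show that the length drops by at least $2$ when passing to $\Gal(L/K_P)$-invariants, and then argue that for a cyclic module over a complete DVR carrying a $\bZ/p\bZ$-action this drop can be at most $1$. Your only deviation is the harmless preliminary descent from $W$ to the unramified ring $W_0$, and one minor correction: the class number input you need is Goss--Sinnott \cite{GOS&SIN}, not \cite{TAE1}.
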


\begin{proof}
Set 
\[
	U=C(L)(\widetilde {\omega_P}^{-n})=C(\widetilde{K_Q})(\omega_Q^{-n(q^{pd}-1)/(q^d-1)})
\]
and assume that $U$ is  $W$-cyclic, but that $\beta(n)$ and $\gamma(n)$ are divisible by $P$. 

Since $\beta(n)$ is divisible by $P$, it follows that $U^{\Gal(L/K_P)}=C(K_P)(\omega_P^{-n})$ is nonzero, and in particular that $U$ is nonzero. Let $x\in U$ be a generator, so that $U= Wx$. Let $g$ be a generator of $\Gal(L/K_P).$ We have $gx=wx$ for some $w\in W^\times$. This implies that $w^px=x$, and it follows that $w^p-1\equiv 0\pmod{p}$ and $w\equiv 1\pmod {p}.$ Since $v_p(1+w+\cdots+w^{p-1})=1$ we find
\[
	pU = (1+w+\cdots + w^{p-1})U \subset U^{\Gal(L/K_P)}
\]
and therefore the length of $U/U^{\Gal(L/K_P)}$ is at most $1$. 

On the other hand,
by \cite{GOS&SIN} and Corollary \ref{cordivs}, we have that the length of  $U/U^{\Gal(L/K_P)}$ equals
\[
	v_p(L(1,L/\tilde{k},\widetilde{\omega_P}^{n}))-v_p(L(1,K_P/k,\omega_P^n))
\]
and by (\ref{Ldecomp}) this equals
\[
	(p-1)v_p(L(1,L/k,\psi\omega_P^n)).
\]
From Proposition \ref{propannulation} we deduce that the length of $U/U^{\Gal(L/K_P)}$ is at least $2$, a contradiction.
\end{proof}

\section{Kummer-Vandiver}
If $P\in A$ is monic irreducible we write $Y_P$ for the spectrum of the integral closure of $A$ in $K_P$.

\begin{theorem}\label{thmKV}
Assume $p\not =2$. Let $P\in A$ be monic irreducible of degree $d$ and such that $i(P)\not =0.$
Let $n$ be an integer such that
\begin{enumerate}
\item $\beta(n)$ is divisible by $P$ if $n$ is not divisible by $q-1$;
\item $\gamma(n)$ is divisible by $P$.
\end{enumerate}
Let $Q(T)=P(T^p-T)$ and $N=n(q^{pd}-1)/(q^d-1)$. Then $Q$ is irreducible in $A$ and 
\[
	{\mathrm H}^1( Y_{Q,\fl}, C[Q]^{\mathrm D} )(\omega_Q^{-N-1}) \neq \{0\}.
\]
\end{theorem}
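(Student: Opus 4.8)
The plan is to derive the theorem from Theorem~\ref{Theoremcyc} together with the flat-duality description of the groups $H_Q$ worked out in \cite{TAE1}. The change of variables $\theta\mapsto T$ identifies $\widetilde{A}$ with $A$, $\widetilde{k}$ with $k$ and $\widetilde{K_Q}$ with $K_Q$, so the irreducibility of $Q$ in $A$ is exactly the irreducibility of $Q(\theta)$ noted above, which holds because $i(P)\neq 0$ (by (\ref{eqartinsymbol})). We treat the case $q-1\nmid n$ --- the only one relevant to Question~\ref{QKV}, and the only one in which $\beta(n)$ is defined --- and may in addition assume $1\leq n\leq q^d-2$. Indeed, $N\bmod (q^{pd}-1)$ depends only on $n$ modulo $q^d-1$, since $(q^{pd}-1)/(q^d-1)$ is an integer, so $\omega_Q^{-N-1}$ is unaffected by replacing $n$ by its representative in that range; and a short equidistribution argument --- the residues modulo $P$ of the monic polynomials of a fixed degree $\geq\deg P$ are uniformly distributed, and remain so after weighting by $i$ --- shows that $\beta(n)\bmod P$ and $\gamma(n)\bmod P$ likewise depend only on $n$ modulo $q^d-1$, so the hypotheses $P\mid\beta(n)$ and $P\mid\gamma(n)$ are preserved as well.

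The heart of the matter is the comparison of $H_Q(\omega_Q^{-N-1})$ with a divisor class group component of $K_Q$. Here one argues as in \cite[Remark 2]{TAE1}, with the Carlitz module $C$ now in the role of $\bG_m$ and $C[Q]$ in the role of the module of $Q$-torsion roots of unity: the Kummer sequence $0\to C[Q]\to\bG_a\xrightarrow{\phi_Q}\bG_a\to 0$ on $Y_Q$, together with flat duality and the fact that $\bG_a$ has no higher flat cohomology on the affine scheme $Y_Q=\Spec R$, expresses $H_Q(\omega_Q^{-N-1})$ in terms of $C(K_Q)=W\otimes\Cl K_Q$. Pairing against $\bG_m$, rather than against $C[Q]$ itself, inverts the character and twists it by $\omega_Q$, which produces both the sign and the shift by one in the exponent; and tracking the behaviour at the primes above $\infty$ --- which, as in Corollary~\ref{cordivs}, contribute nothing to $C(K_Q)(\omega_Q^{-N})$ since $q-1\nmid n$, but may contribute a one-dimensional local term on the side of $H_Q$ --- one is led to the statement that the $A/Q$-dimension of $H_Q(\omega_Q^{-N-1})$ and the minimal number of $W$-generators of $C(K_Q)(\omega_Q^{-N})$ differ by at most one. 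This possible discrepancy of one is the precise analogue of the term $\delta\in\{0,1\}$ in Leopoldt's Spiegelungssatz, and it is exactly why non-cyclicity, rather than mere non-triviality, of the class group component is what is needed.

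Granting this comparison, the theorem follows immediately. The hypotheses of Theorem~\ref{Theoremcyc} --- $p\neq 2$, $P$ monic irreducible of degree $d$ with $i(P)\neq 0$, $1\leq n\leq q^d-2$, $q-1\nmid n$, and $P\mid\beta(n)$, $P\mid\gamma(n)$ --- all hold, so $C(K_Q)(\omega_Q^{-N})$ is not $W$-cyclic and hence needs at least two generators; by the comparison, $H_Q(\omega_Q^{-N-1})\neq\{0\}$. I expect the only substantial work to be the middle step: setting up the flat-duality comparison precisely, getting the twist and the shift by one right, and checking that the local contribution at $\infty$ separating $H_Q(\omega_Q^{-N-1})$ from $C(K_Q)(\omega_Q^{-N})$ is at most one-dimensional, so that non-cyclicity of the latter is inherited by the former. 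The remaining steps --- the reduction of $n$, the irreducibility of $Q$, and the invocation of Theorem~\ref{Theoremcyc} --- are routine.
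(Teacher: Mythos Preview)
Your overall plan---invoke Theorem~\ref{Theoremcyc} to obtain non-cyclicity of $C(K_Q)(\omega_Q^{-N})$, then transfer this to non-vanishing of $H_Q(\omega_Q^{-N-1})$---is the paper's as well. But you omit the case $q-1\mid n$, which the theorem as stated does cover; there Theorem~\ref{Theoremcyc} is not available, and the paper instead appeals to \cite[Proposition~2.6]{ANG} for $(W\otimes\Pic Y_Q)(\omega_Q^{-N})\neq 0$, then to Okada's theorem and to the Herbrand--Ribet theorem of \cite{TAE1}.

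For $q-1\nmid n$, the comparison you leave as ``substantial work'' is carried out in the paper along different lines from the direct Spiegelungssatz-type pairing you sketch. The paper first passes from $C(K_Q)(\omega_Q^{-N})$ to $(W\otimes\Pic Y_Q)(\omega_Q^{-N})$, losing at most one generator to the primes above $\infty$, so non-cyclicity yields $(W\otimes\Pic Y_Q)(\omega_Q^{-N})\neq 0$. It then splits on whether $q-1\mid N$. If so (Case~2), Okada's theorem converts this non-vanishing into the congruence $B(q^{pd}-1-N)\equiv 0\pmod Q$, and \cite[Theorem~1]{TAE1} concludes. If not (Case~3), exact sequence~(2) of \cite{TAE1} forces the $\omega_Q^{-N}$-part of the Cartier-invariant differentials $A/Q\otimes_{\bF_q}\Gamma(Y_Q,\Omega_{Y_Q})^{c=1}$ to be at least two-dimensional, and the exact sequence of \cite[Theorem~2]{TAE1}, whose last term is one-dimensional in that component, then gives $H_Q(\omega_Q^{-N-1})\neq 0$. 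So the off-by-one you anticipate is real, but it is mediated by the specific exact sequences of \cite{TAE1} (through $\Pic Y_Q$ and Cartier-fixed differentials) rather than by a flat-duality pairing of $C[Q]$ against $\bG_m$; your description of that pairing is too schematic to stand as a proof, and the paper's need for a case split on $q-1\mid N$ suggests the passage is less uniform than you hope.
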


\begin{proof}
We split the proof in cases depending on the divisibility of $n$ and $dn$ by $q-1$. Note that $N$ is divisible by $q-1$ if and only if $nd$ is divisible by $q-1$.

\emph{Case 1.}
Assume that $n$ is divisible by $q-1$. This case is treated in \cite{ANG}. By \cite[Proposition 2.6]{ANG}, we get
\[
	(W\otimes\Pic Y_Q)(\omega_Q^{-N}) \neq \{0\}.
\]
Without loss of generality we may assume that $1\leq n < q^d-1$. Then by the work of Okada (\cite{OKA}, see also \cite[\S 8.20]{GOS}):
\[
	B(q^{pd}-1-N)\equiv 0\pmod{Q(T)}.
\]
By Theorem 1 of \cite{TAE1} (the ``Herbrand-Ribet theorem'') we conclude
\[
	{\mathrm H}^1( Y_{Q,\fl}, C[Q]^{\mathrm D} )(\omega^{-N-1}) \neq \{0\}.
\]	
\par

\emph{Case 2.}
Now assume that $n$ is not divisible by $q-1$ but $dn$ is. Then by Theorem \ref{Theoremcyc} the module
 $C(K_Q) ({\omega_Q}^{-N})$ is not cyclic, and so we must have:
\[
	(W\otimes \Pic Y_Q)({\omega_Q}^{-N})\not = \{ 0\}.
\]
We conclude with the same argument as in case 1.

\emph{Case 3.}
Assume that $nd$ is not divisible by $q-1$. As in the previous case, we find that 
\[
	(W\otimes \Pic Y_Q)({\omega_Q}^{-N})\not = \{ 0\}.
\]
Now we conclude we a different argument. By the above non vanishing, and by exact sequence (2) of \cite{TAE1} we find that the space of Cartier-invariant $\omega^{-N}$-typical differential forms 
\[
	A/Q \otimes_{\bF_q} \Gamma(Y,\Omega_Y)^{{\mathrm{c}}=1} ( \omega^{-N} )
\]
is at least two-dimensional. With the exact sequence of Theorem 2 of \emph{loc. cit.} one concludes that
\[
	{\mathrm H}^1( Y_{Q,\fl}, C[Q]^{\mathrm D} )(\omega^{-N-1}) \neq \{0\}.
\]	
(Using the fact that the $\omega^{-N}$-part of the last module of the exact sequence of Theorem 2 is 1-dimensional. Note that the same argument is used in the proof of Theorem 1 of \emph{loc. cit.}, see \cite[\S 4]{TAE1}.)
\end{proof}

\subsection{An example}
Let $q=3$. One can verify that with $P=T^3-T^2+1$ and $n=13$ the conditions of Theorem \ref{thmKV} are satisfied. Indeed, one has
\[
	\beta(13) = -T^9 -T^3 -T + 1 
\]
and 
\[
	\gamma(13) = -T^{12} -T^{10} + T^9 -T^4 + T^3 + T -1, 
\]
both of which are divisible by $P=T^3-T^2+1$ in $\bF_3[T]$. Also, note that $n$ is not divisible by $q-1$. Using Theorem \ref{thmKV} we thus find that the prime
\[
	Q(T) = P(T^3-T) = T^9 - T^6 - T^4 - T^3 - T^2 + 1
\]
satisfies
\[
	{\mathrm H}^1( Y_{Q,\fl}, C[Q]^{\mathrm D} )(\omega^{-N-1}) \neq \{0\}
\]
where we have
\[
	N=n\frac{q^{pd}-1}{q^d-1} = 9841.
\]
This is the counterexample to the analogue of the Kummer-Vandiver conjecture stated at the end of the introduction (we have $-9842 \equiv 9840 \pmod{q^{pd}-1}$.)

\section{Characteristic $p=2$}

We now assume that $p=2$. With some minor changes, the above arguments still work, but the result is weaker. 

We keep the notations of section \ref{seccyc}.  If $P(T)$ is a prime in $A$ of degree $d$ such that $i(P)\not =0$, then $Q(\theta)=P(T)$  is a prime of degree $2d$ in $\widetilde{A}=\mathbb F_q[\theta ],$ where $\theta^2-\theta =T$.  Set again $L=\widetilde{k}K_P\subset \widetilde{K_Q}.$ We have the following version of Theorem 
\ref{Theoremcyc}.

\begin{theorem}\label{Theoremcyc2}
Assume $p= 2$.  Let $P\in A$ be monic irreducible of degree $d$, and such that $i(P)\neq 0.$ 
Let $n$ be an integer such that $1\leq n \leq q^d-2$, not divisible by $q-1$ and such that
$\gamma(n)$ is divisible by $P$ and $L(1,K_P/k,\omega_P)$ is divisible by $4$.  Then $C(K_Q)(\omega_Q^{-n(q^{pd}-1)/(q^d-1)})$ is not cyclic.
\end{theorem}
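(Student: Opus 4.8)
The plan is to imitate, almost verbatim, the proof of Theorem \ref{Theoremcyc}, tracking the two places where the hypothesis $p\neq 2$ was used and replacing them with the stronger input we have now ($4 \mid L(1,K_P/k,\omega_P)$ in place of $P\mid\beta(n)$, together with the condition on the $p$-adic valuation it encodes). First I would set $U = C(L)(\widetilde{\omega_P}^{-n}) = C(\widetilde{K_Q})(\omega_Q^{-n(q^{pd}-1)/(q^d-1)})$ and run the argument by contradiction: assume $U$ is $W$-cyclic. The first use of $p\neq 2$ was to deduce from $P\mid\beta(n)$ that $U^{\Gal(L/K_P)} = C(K_P)(\omega_P^{-n})$ is nonzero; here I would instead use the $p=2$ analogue of Proposition \ref{propannulation}, namely that $4 \mid L(1,K_P/k,\omega_P)$ forces $v_2(L(1,K_P/k,\omega_P^n)) \geq 1$ hence (via \cite{GOS&SIN}) that $C(K_P)(\omega_P^{-n})\neq 0$. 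As before this shows $U\neq 0$; pick a generator $x$ with $U = Wx$.

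Next, the norm/transfer computation: with $g$ a generator of $\Gal(L/K_P)$ (now a group of order $p=2$) write $gx = wx$, so $w^2 = 1$ in $W^\times$, whence $w^2-1\equiv 0\pmod 2$ and $w\equiv 1\pmod 2$. The second place $p\neq 2$ was used was the assertion $v_p(1+w+\cdots+w^{p-1}) = 1$; when $p=2$ this becomes $v_2(1+w)$, and since $w\equiv 1\pmod 2$ we have $v_2(1+w)\geq 1$ — but now we cannot conclude it equals $1$, it could be larger. This is the source of the weakening: we only get $2U = (1+w)U\subset U^{\Gal(L/K_P)}$, hence $\Leng(U/U^{\Gal(L/K_P)})\leq v_2(2) = 1$ still holds (the containment $2U\subset U^{\Gal(L/K_P)}$ is all one needs for the upper bound, regardless of $v_2(1+w)$). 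So the upper bound $\Leng(U/U^{\Gal(L/K_P)})\leq 1$ survives unchanged.

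For the lower bound I would again invoke \cite{GOS&SIN} and Corollary \ref{cordivs} (whose proof used only that $q-1\nmid n$ and the ramification structure, not $p\neq 2$) to identify
\[
	\Leng(U/U^{\Gal(L/K_P)}) = v_2(L(1,L/\tilde k,\widetilde{\omega_P}^{n})) - v_2(L(1,K_P/k,\omega_P^n)),
\]
and by (\ref{Ldecomp}) the right-hand side equals $(p-1)v_2(L(1,L/k,\psi\omega_P^n)) = v_2(L(1,L/k,\psi\omega_P^n))$. The final step is to show this valuation is at least $2$ under the hypotheses $\gamma(n)\mid P$ and $4\mid L(1,K_P/k,\omega_P)$; this requires a $p=2$ substitute for Proposition \ref{propannulation}. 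The main obstacle is exactly this: when $p=2$ the congruence $\zeta_p - 1 = 0$ degenerates (there is no $\zeta_2 \neq 1$), so one must work $2$-adically rather than $(\zeta_p-1)$-adically. I expect that the decomposition (\ref{psiomega}) with $\psi$ trivial collapses and that $L(1,L/k,\psi\omega_P^n)$ must instead be analyzed via $L(1,L/k,\omega_P^n)$ and a correction term governed by $\sum_{a}i(a)\omega_P(a)^n$, with $v_2\geq 2$ equivalent to $4 \mid L(1,K_P/k,\omega_P^n)$ (equivalently $4\mid L(1,K_P/k,\omega_P)$ after reducing the exponent) \emph{and} $2\mid\gamma(n)$-type divisibility — i.e. $P\mid\gamma(n)$. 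Granting that $2$-adic refinement of Proposition \ref{propannulation}, the lower bound gives $\Leng(U/U^{\Gal(L/K_P)})\geq 2$, contradicting the upper bound of $1$, so $U$ is not $W$-cyclic, which is the assertion.
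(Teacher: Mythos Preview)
Your overall strategy is right, but the upper-bound step contains a real gap. You assert that $2U \subset U^{\Gal(L/K_P)}$ holds ``regardless of $v_2(1+w)$'', giving $\Leng(U/U^g)\leq 1$ automatically. This is false. If $U = Wx$ is cyclic of length $N$ and $gx = wx$, then $U^g = \{ax : v_2(a) \geq N - v_2(w-1)\}$, so $2U \subset U^g$ is \emph{equivalent} to $v_2(w-1) \geq N-1$. From $w \equiv 1 \pmod 2$ alone you only get $v_2(w-1)\geq 1$; for instance $w=-1$ (so $g$ acts as $-1$) gives $U^g = 2^{N-1}U$ and $\Leng(U/U^g)=N-1$, which can be large. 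Also note that $g^2=1$ gives $w^2\equiv 1\pmod{2^N}$, not merely $\pmod 2$ as you wrote; this sharper congruence is essential.

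The paper closes this gap by using the strengthened hypothesis \emph{twice}, not just for the lower bound. From $(w-1)(w+1)\equiv 0\pmod{2^N}$ together with $\min(v_2(w-1),v_2(w+1))=v_2(2)=1$, one of the two valuations is $1$ and the other is $\geq N-1$. The hypothesis $4\mid L(1,K_P/k,\omega_P^n)$ gives $\Leng(U^g)=v_2(w-1)\geq 2$, which rules out $v_2(w-1)=1$; hence $v_2(w-1)\geq N-1$ and $v_2(w+1)=1$, so indeed $(1+g)U=2U$ and the bound $\Leng(U/U^g)\leq 1$ follows. Only then does the contradiction with $\Leng(U/U^g)=v_2(L(1,L/k,\psi\omega_P^n))\geq 2$ fire. (A side remark: $\zeta_2=-1$ does exist; the actual obstruction to Proposition~\ref{propannulation} is that $(\zeta_2-1)^2=4$ no longer divides $p=2$, which is why one needs divisibility of $L(1,K_P/k,\omega_P^n)$ by $4$ rather than just $2$.)
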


Note that $\beta(n)$ is divisible by $P$ if and only if $L(1,K_P/k,\omega_P)$ is divisible by $2$, so the hypothesis are stronger than those in Theorem \ref{Theoremcyc}.

\begin{proof}[Proof of Theorem \ref{Theoremcyc2}]
The proof is almost identical to that of Theorem \ref{Theoremcyc}. Let $\psi$ be the unique non-trivial character of $G={\rm Gal}(\widetilde{k}/k).$

Proposition \ref{propannulation} does no longer hold, since we no longer have that $(\zeta_p-1)^2$ divides $p$. However, if $\gamma(n)$ is divisible by $P$ and if $L(1,K_P/k,\omega_P)$ is divisible by $4$ (instead of $2$), we can still conclude
\[
 	v_p( L(1,L/k,\psi\omega_P^n) ) \geq 2.
\]
Denote the length of $U=C(L)(\widetilde \omega_P^{-n})$ by $N$. We have
\[
	N = v_p(  L(1,L/k,\psi\omega_P^n) ) + v_p( L(1,L/K,\omega_P^n) ) \geq 4.
\]
Let $g$ be the nontrivial element of $\Gal( L/K_P )$. Then  the length of $U^g$ equals
$v_p( L(1,L/K,\omega_P^n) )$, which by hypothesis is at least $2$. 

Suppose that $U$ is a cyclic $W$-module, and let $x\in U$ be a generator, so that $U=Wx$. There is a $w\in W^\times$ so that $gx=wx$. We then have that $w^2-1$ is divisible by $2^N$. We find that $w-1$ is divisible by $2^{N-1}$ but not by $2^N$ (since $U^g \neq U$.) It follows that $(1+g)U = 2U$, and as in the proof of
Theorem \ref{Theoremcyc} we conclude that $U/U^g$ has length at most $1$, contradicting our hypothesis. We conclude that $U$ cannot be $W$-cyclic.
\end{proof}

Using this, we get the following variation of Theorem \ref{thmKV} in characteristic 2, with the same proof.

\begin{theorem}\label{thmKV2}
Assume $p=2$. Let $P\in A$ be monic irreducible of degree $d$ and such that $i(P)\not =0.$
Let $n$ be an integer such 
\begin{enumerate}
\item  $L(1,\omega_P^n)$ is divisible by $4$ if $n$ is not divisible by $q-1$;
\item $\gamma(n)$ is divisible by $P$.
\end{enumerate}
Let $Q(T)=P(T^2-T)$ and $N=n(q^d+1)$. Then $Q$ is irreducible in $A$ and 
\[
	{\mathrm H}^1( Y_{Q,\fl}, C[Q]^{\mathrm D} )(\omega^{-N-1}) \neq \{0\}.
\]
\end{theorem}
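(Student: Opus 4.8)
The plan is to mimic the three-case structure of the proof of Theorem \ref{thmKV}, making the same adjustments that distinguish Theorem \ref{Theoremcyc2} from Theorem \ref{Theoremcyc}. First I would record that $Q(T)=P(T^2-T)$ is irreducible in $A$: since $i(P)\neq 0$, formula (\ref{eqartinsymbol}) shows that $Q(\theta)=P(T)$ is irreducible in $\widetilde A=\bF_q[\theta]$, and the standard Artin--Schreier argument (as in \cite[Lemma 2.1]{ANG}) then forces $Q(T)$ to be irreducible in $A$; its degree is $2d$. As in Theorem \ref{thmKV}, write $N=n(q^{pd}-1)/(q^d-1)=n(q^d+1)$ since $p=2$, and observe that $N$ is divisible by $q-1$ precisely when $nd$ is.

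The case split then proceeds exactly as before. In the case where $n$ is divisible by $q-1$, one invokes \cite[Proposition 2.6]{ANG} to get $(W\otimes\Pic Y_Q)(\omega_Q^{-N})\neq\{0\}$, applies Okada's theorem to translate this into $B(q^{pd}-1-N)\equiv 0\pmod{Q}$, and concludes via the Herbrand--Ribet theorem \cite[Theorem 1]{TAE1} that $\mathrm H^1(Y_{Q,\fl},C[Q]^{\mathrm D})(\omega^{-N-1})\neq\{0\}$. In the case where $n$ is not divisible by $q-1$ but $nd$ is, one uses Theorem \ref{Theoremcyc2} in place of Theorem \ref{Theoremcyc} to deduce that $C(K_Q)(\omega_Q^{-N})$ is not cyclic, hence $(W\otimes\Pic Y_Q)(\omega_Q^{-N})\neq\{0\}$, and then finishes as in the first case. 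In the remaining case, where $nd$ is not divisible by $q-1$, the non-cyclicity from Theorem \ref{Theoremcyc2} again yields a nonzero $\omega_Q^{-N}$-part of $W\otimes\Pic Y_Q$, and one concludes using the exact sequences (2) of \cite{TAE1} and of \cite[Theorem 2]{TAE1}: the nonvanishing forces the space of Cartier-fixed $\omega^{-N}$-typical differentials to be at least two-dimensional, while the last term of that exact sequence is one-dimensional in the $\omega^{-N}$-part, so the cohomology group in question must be nonzero. This is the same line of reasoning used in \cite[\S 4]{TAE1}.

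The one point that genuinely requires the new hypothesis is the input to Case 2 and Case 3, namely the non-cyclicity statement, and this has already been isolated as Theorem \ref{Theoremcyc2}: in characteristic $2$ one loses the relation $(\zeta_p-1)^2\mid p$ that powered Proposition \ref{propannulation}, so one cannot conclude non-cyclicity merely from $P\mid\beta(n)$ and $P\mid\gamma(n)$; instead one demands the stronger $4\mid L(1,K_P/k,\omega_P)$ together with $P\mid\gamma(n)$, which still forces $v_2(L(1,L/k,\psi\omega_P^n))\geq 2$ and hence, via the length computation of \cite{GOS&SIN} and Corollary \ref{cordivs}, the desired contradiction with cyclicity. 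Beyond that, there is no essential obstacle: the Herbrand--Ribet and Okada inputs are insensitive to the characteristic, and the argument of Case 3 uses only the exact sequences from \cite{TAE1}, which hold in all characteristics. Thus the main (and only) new ingredient is the characteristic-$2$ non-cyclicity theorem, and the present theorem follows from it by the verbatim case analysis of Theorem \ref{thmKV}.
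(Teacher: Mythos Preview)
Your proposal is correct and is exactly what the paper does: its proof of Theorem~\ref{thmKV2} consists of the single remark that one repeats the three-case argument of Theorem~\ref{thmKV} verbatim, invoking Theorem~\ref{Theoremcyc2} in place of Theorem~\ref{Theoremcyc} when $n$ is not divisible by $q-1$. You have simply written out that verbatim repetition in full, including the same appeals to \cite[Proposition~2.6]{ANG}, Okada, and the exact sequences of \cite{TAE1}.
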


\subsection{An example}
Let $q=4$ and $\bF_4=\bF_2(\alpha)$. Then $P = T^5 + \alpha^2 T^4 + T^3 + \alpha T^2 + \alpha^2$ 
with $n=341=(4^5-1)/(4-1)$ satisfies the hypothesis, leading to a counterexample $Q \in \bF_4[T]$ to 
Question \ref{QKV}.

\section{Heuristics}\label{sech}

This section contains no mathematical theorems, but only crude heuristic arguments and numerical observations.
Our main goal is to convince the reader that one could \emph{a priori} expect to construct many counterexamples using the above base change strategy.

The arguments are specific to \emph{odd} $q$ so we assume throughout the section that $q$ is odd.

We argue that  one could expect that Theorem \ref{thmKV} yields at least $cX^{1/p}(\log X)^{-1}$ counter-examples of residue cardinality at most $X$ to Question \ref{QKV} (for some constant $c>0$), which is much more than the $\log \log X$ counter-examples predicted by Washington's heuristics.

In fact we will only consider counter-examples of a particular form. Assume that $q$ is odd. 
Note that by Theorem \ref{thmKV}, if we are given
\begin{enumerate}
\item an integer $m$ with $1\leq m < q-1$;
\item a monic irreducible $P \in A$ of degree $d$,
\end{enumerate}
such that
\begin{enumerate}
\item $q-1$ does not divide $md$;
\item $\beta(m(q^d-1)/(q-1))\equiv\gamma(m(q^d-1)/(q-1))\equiv 0 \pod{P}$;
\end{enumerate}
then $Q(T)=P(T^p-T)$ is monic irreducible of degree $pd$ and 
\[
	{\mathrm H}^1( Y_{Q,\fl}, C[Q]^{\mathrm D} )(\omega^{1+m(q^{pd}-1)/(q-1)}) \neq \{0\},
\]
giving a counterexample to Question \ref{QKV}.

\smallskip

The reason to restrict to $n$ of the form $m(q^d-1)/(q-1)$ lies in the following trivial observation:
\begin{lemma}\label{lem}
Let $P \in A$ be irreducible of degree $d$. If $n$ is a multiple of $(q^d-1)/(q-1)$ then  $\beta(n)$ and $\gamma(n)$ modulo $P$ lie inside $\bF_q \subset A/P$.  \qed
\end{lemma}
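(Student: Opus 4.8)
The statement is elementary, and the plan is to make the ``trivial observation'' precise in two short steps. Write $\bar a \in A/P$ for the reduction of $a\in A$, and recall that $A/P$ is a field with $q^d$ elements in which $\bF_q$ sits as the unique subfield of cardinality $q$; equivalently $\bF_q = \{\, x\in A/P : x^q = x \,\}$, so in particular $\bF_q$ is a \emph{subring} of $A/P$, closed under both addition and multiplication.

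First I would record the pointwise fact that if $n$ is a multiple of $(q^d-1)/(q-1)$, then $\bar a^{\,n}\in\bF_q$ for every $a\in A$. Indeed, if $P\mid a$ this is clear since $\bar a = 0$, and otherwise $\bar a$ lies in $(A/P)^\times$, which is cyclic of order $q^d-1$. Since $(q^d-1)/(q-1)$ divides $n$ we have $(q^d-1)\mid n(q-1)$, hence $\bar a^{\,n(q-1)}=1$, i.e. $(\bar a^{\,n})^q = \bar a^{\,n}$, so $\bar a^{\,n}\in\bF_q$. (Equivalently: $\bar a^{\,(q^d-1)/(q-1)}$ is a $(q-1)$-st root of unity in $(A/P)^\times$, hence lies in $\bF_q^\times$, and $\bar a^{\,n}$ is a power of it.)

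Next I would reduce the defining sums modulo $P$. For $\beta(n)$ this requires $q-1\nmid n$, which is anyway forced by $\beta(n)$ being defined; then by definition the inner sums $\sum_{a\in A_m}a^n$ vanish for all $m$ sufficiently large, and the analogous statement for $\sum_{a\in A_m}i(a)a^n$ is \cite[Lemma 3.2]{ANG}. Thus $\beta(n)\bmod P$ and $\gamma(n)\bmod P$ are \emph{finite} sums, the first of terms $\bar a^{\,n}$ and the second of terms $i(a)\,\bar a^{\,n}$ with $i(a)\in\bZ/p\bZ = \bF_p\subseteq\bF_q$. By the previous step each $\bar a^{\,n}$ lies in $\bF_q$, and each $i(a)\,\bar a^{\,n}$ is then a product of two elements of $\bF_q$, so it too lies in $\bF_q$. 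Since $\bF_q$ is closed under addition inside $A/P$, the finite sums lie in $\bF_q$, which is exactly the claim.

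There is no real obstacle: the only points to keep in mind are that the series defining $\beta(n)$ and $\gamma(n)$ are genuinely finite, and that $\bF_q$ is a subring (not merely a subset) of $A/P$, so that the reductions, being finite $\bF_p$-linear combinations of $n$-th powers from $(A/P)^\times\cup\{0\}$, cannot leave $\bF_q$.
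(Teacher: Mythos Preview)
Your argument is correct and is precisely the intended one: the paper gives no proof at all (the \qed\ is placed immediately after the statement, and the authors call it a ``trivial observation''), so what you have written is simply the natural unpacking of that observation via $\bar a^{\,n}\in\bF_q$ and closure of $\bF_q$ under addition.
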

So we may expect that $\beta(n)$ and $\gamma(n)$ are much more likely to vanish modulo a prime $P$ of degree $d$ if $n$ is of the form $m(q^d-1)/(q-1)$.

\smallskip

Assume that $q-1$ does not divide $md$. We make the following hypotheses on a ``random'' monic irreducible $P$ of degree $d$:
\begin{enumerate}
\item $i(P)$ is non-zero with probability $(p-1)/p$;
\item $\beta(m(q^d-1)/(q-1))$ is zero modulo $P$ with probability $1/q$;
\item $\gamma(m(q^d-1)/(q-1))$ is zero modulo $P$ with probability $1/q$;
\item the above probabilities are independent of each other, and independent of the vanishing of $i(P)$.
\end{enumerate}

The first hypothesis is essentially an instance of the Chebotarev density theorem, the second and the third are motivated by Lemma \ref{lem}, and the fourth is nothing more than wishful thinking. To some extent one can verify these statements experimentally. In Table 1 we reproduce some numerical data regarding these hypotheses. Note that in the example of Table 1 $\beta$ seems to have a slight bias towards vanishing, we have no explanation for this bias.

Finally, we show that under the above hypothesis, for some $c>0$ we find that for all $X$ sufficiently large there are at least $cX^{1/p}(\log X)^{-1}$ primes of residue cardinality at most $X$ that contradict Question \ref{QKV}.

Indeed, for all $X$ sufficiently large there is a positive integer $d$ with
\[
	(\log_q X^{1/p})-2 < d \leq \log_q X^{1/p}
\]
and $d$ not divisible by $q-1$. Taking $m=1$, we should find 
\[
	\frac{p-1}{p} \cdot \frac{1}{q} \cdot \frac{1}{q} \cdot \frac{q^d}{d} 
	\geq c\frac{X^{1/p}}{\log X} 
\]
monic irreducibles $P$ of degree $d$ satisfying the conditions (with $c>0$ independent of $X$). Each of these leads to a counter-example $Q$ of residue cardinality at most $X$.

\bigskip


\begin{table}[h!]
\begin{tabular}{c|c|c|c}
 $d$ &
 $\beta(n)\equiv 0 \pod{P}$ &
 $\gamma(n)\equiv 0 \pod{P}$ & 
 $\beta(n)\equiv\gamma(n)\equiv 0 \pod{P}$ \\
\hline
$9$ & 428 & 318 & 142 \\
$11$ & 395 & 344 & 137 \\
$13$ & 416 & 332 & 147 
\end{tabular}
\smallskip\label{tab1}
\caption{The number of $P$ satisfying various congruences, out of random samples of $1000$ primes $P$ in $\bF_3[t]$ with $i(P)\neq 0$, of degrees $9$, $11$ and $13$. Again $n=(q^d-1)/2$. Note that every $P$ counted in the rightmost column gives rise to a prime $Q$ which gives a counterexample to Question \ref{QKV}.}
\label{tab2}
\end{table}


 \end{document}